\newtheorem{defn}{Definition}[section]
\newtheorem{theo}[defn]{Theorem}
\newtheorem{prop}[defn]{Proposition}
\newtheorem{cor}[defn]{Corollary}
\newtheorem{rem}[defn]{\rm Remark}
\newtheorem{exam}[defn]{\rm Example}
\newenvironment{proof}{{\rm Proof. }}{{\vskip 0.1cm \hfill$\Box$}}
\def\N {{\mathbb N}}
\def\R {{\mathbb R}}
\def\E{{\mathbb E}}
\def\P{{\mathbb P}}
\def\M{{\mathbb M}}
\newcommand{\F}{\mathcal{F}}
\begin{document}


\noindent
{{\Large\bf On the pathwise uniqueness for a class of degenerate It\^{o}-stochastic differential equations}
{\footnote[1]{The research of Haesung Lee was supported by Basic Science Research Program through the National Research Foundation of Korea (NRF) funded by the Ministry of Education (2020R1A6A3A01096151).  \\[4pt]
Mathematics Subject Classification (2020): Primary 60H10, 47D07, Secondary 35B65, 60J60}} \\ \\
\bigskip
\noindent
{\bf Haesung Lee}  \\
\noindent
{\small{\bf Abstract.}  
We show pathwise uniqueness for a class of degenerate It\^{o}-SDE among all of its weak solutions that spend zero time at the points of degeneracy of the dispersion matrix. Consequently, by the Yamada-Watanabe Theorem and a weak existence result, the pathwise unique solutions can be shown to be strong and to exist. The main tools to show pathwise uniqueness are inequalities associated with maximal functions and a Krylov type estimate derived from elliptic regularity and uniqueness in law. \\ \\
\noindent 
{Keywords: pathwise uniqueness, degenerate It\^{o}-SDE, Krylov type estimate, uniqueness in law, maximal function

\section{Introduction}
An It\^{o}-SDE whose diffusion coefficient is not locally uniformly elliptic is called a degenerate It\^{o}-SDE and in this paper, we aim to show pathwise uniqueness for a class of such SDEs on $\R^d$ with $d \geq 3$. In \cite{X11}, a Krylov type estimate induced by a parabolic regularity result on $\R^d$ is essentially used to show pathwise uniqueness for a class of non-degenerate It\^{o}-SDEs with singular drift coefficients. However, it is not clear how such an estimate can be derived when the diffusion coefficient is not locally uniformly elliptic. Recently in \cite{LT19de}, using elliptic and parabolic regularity results and generalized Dirichlet form theory, the existence and uniqueness in law of a quite large class of degenerate It\^{o}-SDEs with fully discontinuous coefficients was shown (see Theorem \ref{theo:2.1}). This leads us to explore a subclass of degenerate It\^{o}-SDE for which a Krylov type estimate can be concluded from resolvent regularity. \\
Let us introduce our main results. For some $y \in \R^d$, we deal with the following time-homogeneous degenerate It\^{o}-SDE 
\begin{equation} \label{sdes}
X_t = y+\int_0^t \sqrt{\frac{1}{\psi}}\,(X_s) \cdot \sigma (X_s) dW_s + \int_0^t \mathbf{G}(X_s) ds, \quad 0 \leq t<\infty, \;
\end{equation}
where $(W_t)_{t \geq 0}$ is a $d$-dimensional standard Brownian motion and the degeneracy stems from $\sqrt{\frac{1}{\psi}}$. Now consider
\\
\text{}\\
{\bf (H1)}: {\it $d \geq 3$, $\mathbf{G}=(g_1, \ldots, g_d) \in L^{\infty}_{loc}(\R^d, \R^d)$ and $A=(a_{ij})_{1 \leq i,j \leq d}$ is a symmetric matrix of functions with $a_{ji}=a_{ij} \in H_{loc}^{1,2d+2}(\mathbb{R}^d) \cap C(\mathbb{R}^d)$ for all $1 \leq i,j \leq d$ such that for every open ball $B\subset \R^d$, there exist constants $\lambda_B, \Lambda_B>0$ with
\begin{equation*}
\lambda_{B} \|\xi\|^2 \leq \langle A(x) \xi, \xi \rangle \leq \Lambda_B \|\xi\|^2, \quad \text{ for all } \xi \in \mathbb{R}^d, \; x \in B.
\end{equation*}
$\sigma=(\sigma_{ij})_{1 \leq i,j \leq d}$ is a matrix of continuous functions with $\sigma \sigma^T = A$. $\psi \in L_{loc}^{q}(\mathbb{R}^d)$ for some $q>2d+2$ with $\psi>0$ a.e., $\frac{1}{\psi} \in L_{loc}^{\infty}(\mathbb{R}^d)$, and $\sqrt{\frac{1}{\psi}} (x)\in [0,\infty)$ for any $x\in \R^d$.  
}\\
\text{}\\[5pt]
If {\bf (H1)} holds and $\psi$, $\sigma$ and $\mathbf{G}$ are as in {\bf (H1)}, then we will consider the following conditions.\\ \\
{\bf (H2)}:
{\it
There exist constants $N_0 \in \N$ and $M>0$ such that for a.e.  $x \in \R^d \setminus \overline{B}_{N_0}$
\begin{eqnarray}\label{conscondit}
-\frac{\langle \frac{1}{\psi}(x)A(x)x, x \rangle}{\left \| x \right \|^2}+ \frac{\mathrm{trace}(\frac{1}{\psi}(x)A(x))}{2}+ \big \langle \mathbf{G}(x), x \big \rangle  \leq M \|x\|^2 (\ln\|x\|  +1). 
\end{eqnarray}}
}
\text{}\\
{
\it
{\bf (H3)}: There exists an open set $E$ in $\R^d$ such that for any open ball $U$ with $\overline{U} \subset E$, 
$$
0<\inf_{U} \psi \leq \sup_{U} \psi<\infty.
$$
}
\\
{
\it
{\bf (H4)}: For some $\widetilde{q} \in (\frac{d}{2}, \infty)$, $\sqrt{\frac{1}{\psi}} \cdot \sigma_{ij} \in H^{1, 2 \widetilde{q}}_{loc}(\R^d)$ and $g_i \in H^{1, \widetilde{q}}_{loc}(\R^d)$ for all $1 \leq i, j \leq d$.
}
\\
\\
Then our main result is stated as follows.
\begin{theo}\label{theo:1.1}
Assume {\bf (H1)}--{\bf (H4)} and let $E$ be as in {\bf (H3)}. Then for $y \in E$, pathwise uniqueness holds  for \eqref{sdes} in the following sense. If $(\widetilde{\Omega}, \widetilde{\F}, \widetilde{\P}_{y}, (\widetilde{\F}_t)_{t \ge 0}, (\widetilde{X}^k_t)_{t \ge 0}, (\widetilde{W}_t)_{t \ge 0})$, $k \in \{1,2 \}$
 are weak solutions to \eqref{sdes} satisfying both
\begin{equation} \label{zerotime}
\int_0^{\infty} 1_{\{\sqrt{\frac{1}{\psi}}=0\}}(\widetilde{X}^k_s) ds =0, \quad \text{ $\widetilde{\P}_{y}$-a.s},
\end{equation}
then
\begin{equation} \label{pathuni}
\widetilde{\P}_{y}(\widetilde{X}_t^1 =\widetilde{X}_t^2, \; t \geq 0 )=1.
\end{equation}
\medskip
Moreover, for $y \in E$ and a $d$-dimensional Brownian motion $(\widetilde{W}_t)_{t \geq 0}$ on a probability space $(\widetilde{\Omega}, \widetilde{\mathcal{F}}, \widetilde{\P})$,
there exists a strong solution $(Y^{y}_t)_{t \geq 0}$ to \eqref{sdes} satisfying
\begin{equation*}
\int_0^{\infty} 1_{\{\sqrt{\frac{1}{\psi}}=0\}}(Y^y_s) ds =0, \quad \text{ $\widetilde{\P}$-a.s.}
\end{equation*}
\end{theo}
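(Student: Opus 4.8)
The plan is to prove the pathwise uniqueness statement first and then to deduce strong existence by combining it with the weak existence of Theorem~\ref{theo:2.1} through the Yamada--Watanabe theorem. Write $\widetilde{\sigma} := \sqrt{\frac{1}{\psi}}\,\sigma$ for the dispersion coefficient and $b := \mathbf{G}$ for the drift. Given two weak solutions $\widetilde{X}^1,\widetilde{X}^2$ as in the statement, driven by the same $\widetilde{W}$ and started at the same $y\in E$, I would study the difference $Z_t := \widetilde{X}^1_t - \widetilde{X}^2_t$, which has $Z_0=0$ and $dZ_t = [\widetilde{\sigma}(\widetilde{X}^1_t)-\widetilde{\sigma}(\widetilde{X}^2_t)]\,d\widetilde{W}_t + [b(\widetilde{X}^1_t)-b(\widetilde{X}^2_t)]\,dt$. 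Since the coefficients are only locally Sobolev, I would replace Lipschitz bounds by the pointwise maximal-function inequalities $\|f(x)-f(x')\|\le C_d\|x-x'\|(M\|\nabla f\|(x)+M\|\nabla f\|(x'))$ and $\|f(x)-f(x')\|^2\le C_d\|x-x'\|^2(M\|\nabla f\|^2(x)+M\|\nabla f\|^2(x'))$, valid for $f\in H^{1,1}_{loc}$ off a Lebesgue-null set, where $M$ is the Hardy--Littlewood maximal operator. Applying the second to the quadratic-variation term generated by $\widetilde{\sigma}$ and the first to the drift inner product, It\^o's formula for $\|Z_t\|^2$ gives $d\|Z_t\|^2 \le \|Z_t\|^2\,d\xi_t + dN_t$, where $N$ is a local martingale and $\xi_t := C\int_0^t \sum_{k=1}^2\big(M\|\nabla\widetilde{\sigma}\|^2(\widetilde{X}^k_s)+M\|\nabla b\|(\widetilde{X}^k_s)\big)\,ds$ is increasing.

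Granting for the moment that $\xi_t<\infty$ a.s., I would set $R_t := \|Z_{t}\|^2 e^{-\xi_{t}}$; since $\xi$ is of finite variation, the differential inequality yields $dR_t \le e^{-\xi_t}\,dN_t$. Localizing by the exit times $\tau_k$ from the balls $B_k$ (which turn $N$ into a true martingale and keep $\xi$ bounded), $R_{t\wedge\tau_k}$ is a nonnegative supermartingale with $R_0=0$, hence $R_{t\wedge\tau_k}\equiv 0$; therefore $\|Z_{t\wedge\tau_k}\|^2=0$, and letting $k\to\infty$ — using that {\bf (H2)} is the standard Lyapunov condition ruling out explosion, so $\tau_k\uparrow\infty$ — gives $Z_t\equiv 0$, i.e.\ \eqref{pathuni}. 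Two facts still have to be secured: that the occupation measures of $\widetilde{X}^k$ do not charge the null set where the maximal-function inequality fails, and that $\xi_t<\infty$.

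Both facts, and this is the technical heart, rest on a Krylov type estimate: for compactly supported $f\in L^{\widetilde{q}}(\R^d)$ and every $t>0$, $\widetilde{\E}_y\big[\int_0^t |f|(\widetilde{X}^k_s)\,ds\big]\le C_t\|f\|_{L^{\widetilde{q}}}$ with $C_t$ independent of the admissible solution. I would derive this from resolvent regularity for the generalized Dirichlet form attached to \eqref{sdes} in Theorem~\ref{theo:2.1}: the invariant density and the Green kernel of the (degenerate) generator lie in the required dual integrability range precisely because $\widetilde q>\frac d2$ — the local integrability of the Newtonian kernel $\|x\|^{2-d}$ fails only at $\widetilde q=\frac d2$ — while the uniqueness-in-law part of Theorem~\ref{theo:2.1}, together with the zero-time condition \eqref{zerotime} that pins each admissible weak solution to the law of the Dirichlet-form process, transports the estimate to an arbitrary solution in the class. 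Applying it to $f=M\|\nabla\widetilde{\sigma}\|^2\,1_{B_k}$ and $f=M\|\nabla b\|\,1_{B_k}$ — both in $L^{\widetilde{q}}$ by the $L^{\widetilde{q}}$-boundedness of $M$ ($\widetilde q>1$) and {\bf (H4)} — gives finiteness of $\xi_{t\wedge\tau_k}$, and applying it to indicators of null sets gives the non-charging property. The main obstacle is exactly the derivation of this Krylov estimate in the degenerate regime, where the usual nondegenerate parabolic route is unavailable; the role of {\bf (H3)} and of the hypothesis $y\in E$ is to furnish the local nondegeneracy guaranteeing that the marginal laws of $\widetilde{X}^k$ admit the locally bounded densities the estimate relies upon.

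Finally, for strong existence I would invoke the Yamada--Watanabe theorem. Theorem~\ref{theo:2.1} supplies a weak solution started at $y\in E$ that satisfies \eqref{zerotime}; combined with the pathwise uniqueness just proved within the class of solutions obeying \eqref{zerotime}, the Yamada--Watanabe principle upgrades this to a strong solution $(Y^y_t)_{t\ge0}$ on the prescribed Brownian basis, which inherits \eqref{zerotime} from uniqueness in law. The only point requiring care is that pathwise uniqueness holds not in the full class of weak solutions but in the subclass singled out by \eqref{zerotime}; since the solution produced by Theorem~\ref{theo:2.1} lies in this subclass and the Yamada--Watanabe construction keeps the approximating solutions within it, the argument goes through unchanged.
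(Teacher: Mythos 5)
Your proposal is correct and, at the structural level, it is the paper's own proof: the technical heart in both cases is the Krylov type estimate obtained from elliptic regularity of the resolvent (this is where {\bf (H3)} and the hypothesis $y\in E$ enter) combined with the uniqueness-in-law part of Theorem~\ref{theo:2.1}, which transports the estimate from the canonical Hunt process to every weak solution satisfying \eqref{zerotime} --- this is precisely Theorem~\ref{krylovest1} and Corollary~\ref{kryloves}; both arguments then apply It\^{o}'s formula to $\|\widetilde{X}^1_t-\widetilde{X}^2_t\|^2$, localize by exit times from balls, control the integrands by Hardy--Littlewood maximal functions of the gradients supplied by {\bf (H4)}, use {\bf (H2)} only for non-explosion, and close with Yamada--Watanabe. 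You deviate at two execution points, and both deviations are workable. First, to handle the non-smoothness of the coefficients, the paper extends $1_{B_n}\widehat{\sigma}_{ij}$ and $1_{B_n}g_i$ to compactly supported Sobolev functions, mollifies them, applies Proposition~\ref{maxth}(ii) (stated for $C_0^1$ functions) to the mollifications, and absorbs the mollification errors with the Krylov estimate; you instead invoke the a.e.\ form of the maximal inequality for Sobolev functions (\cite[Lemma 3.5]{X13}) and apply the Krylov estimate to indicators of Lebesgue-null sets so that the occupation measures of both solutions avoid the exceptional set. Your route is cleaner (no $m\to\infty$ bookkeeping), the paper's avoids discussing exceptional sets along the paths. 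Second, to conclude, the paper normalizes by $\|\widetilde{Z}_s\|^2$ (with $0/0:=0$) so that $\|\widetilde{Z}\|^2$ solves the exact linear equation \eqref{undereq} and is therefore $\|\widetilde{Z}_0\|^2=0$ times a Dol\'eans exponential by \cite[p.\ 378, (2.3) Proposition]{RYor}; your stochastic-Gronwall variant ($R_t=\|Z_t\|^2e^{-\xi_t}$ dominated by a nonnegative continuous local martingale vanishing at $0$, hence a supermartingale, hence identically zero) reaches the same conclusion while avoiding the normalization and thus the need to verify square-integrability of the normalized martingale part. Two points you should state precisely rather than in passing: $\mathcal{M}\|\nabla\widetilde{\sigma}\|$ is not defined for merely locally Sobolev coefficients, so the maximal functions in your $\xi_t$ must be taken of the compactly supported Sobolev extensions (cutting off \emph{after} applying $\mathcal{M}$, as written, is not meaningful), which is harmless inside the stopping times; and your justification of Yamada--Watanabe within the restricted class --- that \eqref{zerotime} is a Borel property of the path law and is therefore inherited by the copies constructed on the canonical space --- is exactly the argument needed, and is in fact more than the paper itself spells out.
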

The main ingredient for the proof of Theorem \ref{theo:1.1} is a new Krylov type estimate (Corollary \ref{kryloves}) which is derived from uniqueness in law (Theorem \ref{theo:2.1}) and elliptic regularity results for the resolvent (see \eqref{rescontest}). Additionally, we adapt a technique used in \cite[Proof of Theorem 2.2]{X11} and  well-known inequalities for the Hardy–Littlewood maximal function (Proposition \ref{maxth}). As a concrete application of Theorem \ref{theo:1.1}, we obtain the following result.

\begin{cor} \label{cor:1.2}
Let $d \geq 3$ and $\alpha \in [0, \frac{d}{2d+2})$ be a constant, $\mathbf{G}=(g_1, \ldots, g_d) \in L^{\infty}_{loc}(\R^d, \R^d)$ satisfying that for some constants $N_0 \in \N$ and $M>0$,
$$
\langle \mathbf{G}(x), x \big \rangle  \leq M \|x\|^2 (\ln\|x\|  +1), \qquad \text{ for a.e. } x \in \R^d \setminus \overline{B}_{N_0}
$$
and that for some $\varepsilon>0$ $g_i \in H^{1, \frac{d}{2}+\varepsilon}_{loc}(\R^d)$ for all $1 \leq i \leq d$. Then for $y \in \R^d \setminus \{0\}$, pathwise uniqueness holds for 
\begin{equation} \label{sdesspe}
X_t = y+\int_0^t \|X_s\|^{\frac{\alpha}{2}} \cdot id \; dW_s + \int_0^t \mathbf{G}(X_s) ds, \quad 0 \leq t<\infty \;
\end{equation}
in the following sense. If $(\widetilde{\Omega}, \widetilde{\F}, \widetilde{\P}_{y}, (\widetilde{\F}_t)_{t \ge 0}, (\widetilde{X}^k_t)_{t \ge 0}, (\widetilde{W}_t)_{t \ge 0})$, $k \in \{1,2 \}$ are weak solutions to \eqref{sdesspe} satisfying both
$$
\int_0^{\infty} 1_{\{ 0\}}(\widetilde{X}^k_s) ds =0, \quad \text{ $\widetilde{\P}_{y}$-a.s},
$$
then \eqref{pathuni} follows. Moreover, for $y \in \R^d \setminus \{0\}$ and a $d$-dimensional  Brownian motion $(\widetilde{W}_t)_{t \geq 0}$ on a probability space $(\widetilde{\Omega}, \widetilde{\mathcal{F}}, \widetilde{\P})$ there exists a strong solution $(Y^{y}_t)_{t \geq 0}$ to \eqref{sdesspe} satisfying
\begin{equation*}
\int_0^{\infty} 1_{\{0\}}(Y^y_s) ds =0, \quad \text{ $\widetilde{\P}$-a.s.}
\end{equation*}
\end{cor}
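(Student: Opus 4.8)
The plan is to deduce Corollary \ref{cor:1.2} from Theorem \ref{theo:1.1} by exhibiting the special SDE \eqref{sdesspe} as an instance of \eqref{sdes} and checking that the structural hypotheses {\bf (H1)}--{\bf (H4)} hold. Matching the dispersion coefficients $\sqrt{\frac1\psi}(x)\,\sigma(x)=\|x\|^{\alpha/2}\,id$, the natural choice is $\sigma=id$, hence $A=\sigma\sigma^T=id$, together with $\psi(x)=\|x\|^{-\alpha}$, so that $\frac1\psi(x)=\|x\|^{\alpha}$ and $\sqrt{\frac1\psi}(x)=\|x\|^{\alpha/2}$. Under this identification the degeneracy set is $\{\sqrt{1/\psi}=0\}=\{0\}$ when $\alpha>0$ (and empty when $\alpha=0$), which matches the side condition $\int_0^\infty 1_{\{0\}}(\widetilde X^k_s)\,ds=0$ appearing in the statement. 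It therefore remains to verify {\bf (H1)}--{\bf (H4)} and then to invoke Theorem \ref{theo:1.1} with $E=\R^d\setminus\{0\}$.

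For {\bf (H1)}, the matrix $A=id$ has constant entries $\delta_{ij}\in H^{1,2d+2}_{loc}\cap C$ and is trivially locally uniformly elliptic with $\lambda_B=\Lambda_B=1$; likewise $\sigma=id$ is continuous with $\sigma\sigma^T=A$. The one nontrivial point is the requirement $\psi\in L^q_{loc}$ for some $q>2d+2$: since $\psi=\|x\|^{-\alpha}$, local integrability is an issue only near the origin, where $\int_{\{\|x\|<1\}}\|x\|^{-\alpha q}\,dx<\infty$ iff $\alpha q<d$. This is exactly where the constraint $\alpha<\frac{d}{2d+2}$ enters: it gives $\alpha(2d+2)<d$, so that by continuity one may pick $q>2d+2$ with $\alpha q<d$. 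The remaining properties $\psi>0$ a.e., $\frac1\psi=\|x\|^{\alpha}\in L^\infty_{loc}$ and $\sqrt{1/\psi}(x)=\|x\|^{\alpha/2}\in[0,\infty)$ are immediate.

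Conditions {\bf (H2)} and {\bf (H3)} follow by direct computation. With $A=id$ and $\frac1\psi(x)=\|x\|^{\alpha}$ the left-hand side of \eqref{conscondit} equals
\begin{equation*}
-\|x\|^{\alpha}+\frac d2\|x\|^{\alpha}+\langle\mathbf{G}(x),x\rangle=\Big(\tfrac d2-1\Big)\|x\|^{\alpha}+\langle\mathbf{G}(x),x\rangle.
\end{equation*}
Because $\alpha<\frac{d}{2d+2}<1$, the term $\|x\|^{\alpha}$ is $o\big(\|x\|^2(\ln\|x\|+1)\big)$ as $\|x\|\to\infty$, so it can be absorbed into the right-hand side; together with the assumed growth bound on $\langle\mathbf{G}(x),x\rangle$ this yields \eqref{conscondit} for suitably enlarged constants $N_0,M$, giving {\bf (H2)}. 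For {\bf (H3)} take $E=\R^d\setminus\{0\}$: the function $\psi=\|x\|^{-\alpha}$ is continuous and strictly positive on $E$, hence bounded away from $0$ and $\infty$ on every ball $U$ with $\overline U\subset E$, and the hypothesis $y\in\R^d\setminus\{0\}$ coincides with $y\in E$.

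The main point, and the only place requiring care, is {\bf (H4)}, which asks for a \emph{single} exponent $\widetilde q\in(\frac d2,\infty)$ serving both coefficients. For the drift I would take $\widetilde q$ slightly above $\frac d2$; since $g_i\in H^{1,\frac d2+\varepsilon}_{loc}$ and $L^{\frac d2+\varepsilon}_{loc}\subseteq L^{\widetilde q}_{loc}$ for $\widetilde q\le\frac d2+\varepsilon$, the condition $g_i\in H^{1,\widetilde q}_{loc}$ holds. For the dispersion $\sqrt{1/\psi}\,\sigma_{ij}=\|x\|^{\alpha/2}\delta_{ij}$ one computes the weak gradient $\frac\alpha2\|x\|^{\alpha/2-2}x$, of size $\sim\|x\|^{\alpha/2-1}$, which lies in $L^{2\widetilde q}_{loc}$ near the origin iff $2\widetilde q<\frac{2d}{2-\alpha}$, i.e. $\widetilde q<\frac{d}{2-\alpha}$. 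Since $\alpha>0$ forces $\frac{d}{2-\alpha}>\frac d2$, the admissible range $\widetilde q\in\big(\frac d2,\min\{\frac d2+\varepsilon,\frac{d}{2-\alpha}\}\big)$ is nonempty, and any $\widetilde q$ in it verifies {\bf (H4)} (the case $\alpha=0$ is trivial, the dispersion then being constant). With all four hypotheses in force, Theorem \ref{theo:1.1} applied with $E=\R^d\setminus\{0\}$ delivers both the pathwise uniqueness assertion and the existence of the desired strong solution, completing the proof.
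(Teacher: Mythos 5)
Your proposal is correct and follows essentially the same route as the paper: the identical identification $A=\sigma=id$, $\sqrt{1/\psi}(x)=\|x\|^{\alpha/2}$, $E=\R^d\setminus\{0\}$, the same exponent choices $q\in(2d+2,\tfrac{d}{\alpha})$ and $\widetilde q\in(\tfrac d2,\tfrac{d}{2-\alpha}\wedge(\tfrac d2+\varepsilon))$, and then an appeal to Theorem \ref{theo:1.1}. The only difference is that you spell out the verification of {\bf (H1)}--{\bf (H4)} (and the trivial $\alpha=0$ case) in more detail than the paper does, which is fine.
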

To the author's knowledge, Engelbert and Schmidt first showed in \cite{ES85} the existence and uniqueness in law of one-dimensional SDEs without drift among the solutions that spend zero time at the zeros of dispersion coefficients. In subsequent studies, they presented in \cite{ES85b} and \cite{ES91} a sufficient condition for the existence of a pathwise unique and strong solution for one-dimensional SDEs with general drift. As another related literature, we refer to \cite{MS73} and \cite{BC05} dealing with similar types of results above. Although here formally similar results are shown for $d \geq 3$, the used methods completely differ from  \cite{MS73}, \cite{BC05}, \cite{ES85}, \cite{ES85b}, \cite{ES91} in that we ultimately develop a new Krylov type estimate by using elliptic regularity and uniqueness in law.
\\
In Section \ref{frame}, we introduce well-known results about the Hardy–Littlewood maximal function and briefely explain the results of \cite{LT19de}, where the existence and uniqueness in law of weak solutions to \eqref{sdes} are shown.  In Section \ref{sec3}, we prove the already mentioned a new Krylov type estimate for weak solutions to \eqref{sdes} satisfying \eqref{zerotime}. Finally, in Section \ref{pathw} we present the proofs of Theorem \ref{theo:1.1} and Corollary \ref{cor:1.2} and some examples where Theorem \ref{theo:1.1} is applied.

\section{Preliminaries} \label{frame}
For basic notations which are not defined in this paper, we refer to \cite[\bf Notations and Conventions]{LT20}. 
For $r>0$ and $x \in \R^d$, denote by $B_r(x)$ the open ball with radius $r$ with center $0$ and let $B_r=B_r(0)$.
Weak and strong solutions to It\^{o}-SDEs are defined as in \cite[Definition 3.50]{LT20}.  For a matrix of functions $B=(b_{ij})_{1 \leq i,j \leq d}$, we write
$\|B\| = \left( \sum_{i,j=1}^d |b_{ij}|^2  \right)^{1/2}$. Denote the $d \times d$ identity matrix by $id$. $\mathcal{M}$ denotes the Hardy–Littlewood maximal operator defined by
$$
\mathcal{M}f(x) = \sup_{r>0} \,\frac{1}{dx(B_r(x))} \int_{B_{r}(x)} |f(y)| dy, \qquad  f \in L^1_{loc}(\R^d), \;\, x \in \R^d.
$$
The following well-known results are crucially used in the proof of Theorem \ref{theo:1.1}.
\begin{prop} \label{maxth}
\begin{itemize}
\item[(i)]
If $f \in L^r(\R^d)$ with $r \in (1, \infty]$, then $\mathcal{M}f \in L^r(\R^d)$ and there exists a constant $c_{d,r}>0$ which only depends on $d$ and $r$ such that
$$
\|\mathcal{M}f \|_{L^r(\R^d)} \leq c_{d,r} \|f\|_{L^r(\R^d)}.
$$
\item[(ii)]
There exists a constant $\widetilde{c}_d>0$ which only depends on $d$ such that for any $f \in C_0^{1}(\R^d)$ and $x, y \in \R^d$ it holds
$$
|f(x)-f(y)| \leq \widetilde{c}_d\|x-y\| (\mathcal{M}\|\nabla f\|(x)+\mathcal{M}\|\nabla f\|(y)).
$$
\end{itemize}
\end{prop}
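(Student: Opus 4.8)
The plan is to prove the two parts separately, since (i) is the classical Hardy--Littlewood maximal inequality and (ii) is a pointwise Sobolev-type bound; both are standard in harmonic analysis (see e.g. Stein's monograph or Evans--Gariepy), so I would either cite them or reproduce the short arguments below. For part (i), the endpoint $r=\infty$ is immediate: each average $\frac{1}{dx(B_r(x))}\int_{B_r(x)}|f(y)|\,dy$ is bounded by $\|f\|_{L^\infty(\R^d)}$, whence $\mathcal{M}f(x)\le \|f\|_{L^\infty(\R^d)}$ and $c_{d,\infty}=1$. For $1<r<\infty$ I would first establish the weak-type $(1,1)$ bound
$$dx(\{\mathcal{M}f>\lambda\})\le \frac{C_d}{\lambda}\,\|f\|_{L^1(\R^d)},\qquad \lambda>0,$$
by a Vitali covering argument: cover a compact subset of $\{\mathcal{M}f>\lambda\}$ by balls on which the average of $|f|$ exceeds $\lambda$, extract a pairwise disjoint subfamily capturing a dimensional fraction of the measure, and sum. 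Combining this with the trivial $L^\infty$ bound and invoking the Marcinkiewicz interpolation theorem yields the strong $(r,r)$ estimate for every $r\in(1,\infty)$ with a constant depending only on $d$ and $r$. I would stress that $r>1$ is essential, since $\mathcal{M}$ is not bounded on $L^1$ and only the weak bound survives at the endpoint $r=1$.

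For part (ii), I would first prove the pointwise potential representation: for $f\in C_0^1(\R^d)$, a ball $B$, and $w\in B$,
$$|f(w)-f_B|\le C_d\int_B \frac{\|\nabla f(u)\|}{\|w-u\|^{d-1}}\,du,\qquad f_B:=\frac{1}{dx(B)}\int_B f\,du,$$
obtained by writing $f(w)-f(u)$ as the line integral of $\nabla f$ along the segment from $w$ to $u$, averaging over $u\in B$, and passing to polar coordinates. The second ingredient is the domination of this Riesz potential by the maximal function: decomposing $B_\rho(w)$ into dyadic annuli $\{2^{-k-1}\rho<\|w-u\|\le 2^{-k}\rho\}$, bounding the kernel by its size on each annulus, and using $\int_{B_{2^{-k}\rho}(w)}\|\nabla f\|\,du\le dx(B_{2^{-k}\rho}(w))\,\mathcal{M}\|\nabla f\|(w)$, a geometric sum gives
$$\int_{B_\rho(w)}\frac{\|\nabla f(u)\|}{\|w-u\|^{d-1}}\,du\le C_d\,\rho\,\mathcal{M}\|\nabla f\|(w).$$

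Finally I would combine these for the two given points. Setting $\rho=\|x-y\|$ and choosing the common ball $B=B_{2\rho}(x)$, which contains both $x$ and $y$, the triangle inequality gives $|f(x)-f(y)|\le |f(x)-f_B|+|f_B-f(y)|$, and the two resulting potential terms are controlled by the displayed estimate: with center $x$ and radius $2\rho$ for the first, and with center $y$ and radius $3\rho$ for the second (using $B\subset B_{3\rho}(y)$ and the nonnegativity of the integrand). This produces exactly $|f(x)-f(y)|\le \widetilde c_d\,\|x-y\|(\mathcal{M}\|\nabla f\|(x)+\mathcal{M}\|\nabla f\|(y))$ with $\widetilde c_d$ dimensional.

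I expect the only genuinely delicate points to be the extraction of the disjoint subfamily in the Vitali step of (i), and, in (ii), the verification that both the potential representation and its maximal-function domination hold with constants independent of $f$ and of the points, so that $\widetilde c_d$ is truly dimensional. Since both facts are classical, I would in practice prefer to cite standard references rather than reproduce the full calculations.
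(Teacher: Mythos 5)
Your proposal is correct, but it takes a genuinely different route from the paper, most notably in part (ii). For (i) the paper simply cites Stein's theorem on the Hardy--Littlewood maximal operator, whereas you reproduce its standard proof (weak $(1,1)$ via Vitali covering, trivial $L^\infty$ bound, Marcinkiewicz interpolation); the content is the same. For (ii) the paper does \emph{not} argue directly: it invokes a lemma of Zhang ([X13, Lemma 3.5]) which yields the inequality with constant $2^d$ only for all $z,w$ \emph{outside a Lebesgue-null set} $A$, and then upgrades this a.e.\ statement to every pair of points by observing that $\|\nabla f\|$ is continuous, hence $\|\nabla f\|\le \mathcal{M}\|\nabla f\|$ everywhere, and that $\mathcal{M}\|\nabla f\|$ is itself continuous (a result of Aldaz--P\'erez), so both sides of the inequality are continuous and the inequality on a dense set propagates to all of $\R^d\times\R^d$. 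You instead prove the pointwise bound directly for \emph{every} $x,y$, via the potential representation $|f(w)-f_B|\le C_d\int_B \|\nabla f(u)\|\,\|w-u\|^{-(d-1)}\,du$ for $w$ in a ball $B$, the dyadic-annuli domination of the Riesz potential by $\mathcal{M}\|\nabla f\|$, and a common ball containing both points; this never introduces an exceptional null set, so no continuity upgrade is needed. The trade-off: the paper's argument is shorter given the two references, but hinges on the nontrivial continuity of the maximal function; yours is longer but self-contained and structurally cleaner, since the everywhere statement comes out immediately for $C^1$ functions. Both yield the claimed dimensional constant $\widetilde{c}_d$.
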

\begin{proof}
(i) directly follows from \cite[Chapter 1, 1.3, Theorem 1 (c)]{S93}. For the proof of (ii), let us choose a measurable set $A \subset \R^d$ in \cite[Lemma 3.5]{X13} such that $dx(A)=0$ and that
\begin{equation} \label{negiest}
|f(z)-f(w)| \leq 2^d \|z-w\| \left(\mathcal{M}\|\nabla f\|(z)+\mathcal{M}\|\nabla f\|(w)\right), \quad \forall z, w \in \R^d \setminus A.
\end{equation}
Since $\| \nabla f\| \in C(\R^d)$, it holds $\|\nabla f\|(x) \leq \mathcal{M}\|\nabla f\| (x)$ for all  $x \in \R^d$. Thus, by \cite[Lemma 3.4]{AP07} we obtain $\mathcal{M} \| \nabla f \| \in C(\R^d)$, hence the assertion follows from \eqref{negiest} and the continuity of $f$.

\end{proof}
\text{}\\
Under the assumption {\bf (H1)}, by \cite[Theorem 4]{LT19de} there exists $\mu=\rho \psi dx$ with $\rho \in H^{1,2d+2}_{loc}(\R^d) \cap C(\R^d)$ and $\rho(x)>0$ for all $x \in \R^d$ such that
$$
\int_{\R^d} \langle \mathbf{G}- \frac{1}{2\psi} \nabla A- \frac{1}{2\rho \psi}A \nabla \rho, \nabla f \rangle d \mu = 0, \quad \forall f \in C_0^{\infty}(\R^d).
$$
Moreover, by \cite[Theorem 3]{LT19de} there exists a sub-Markovian $C_0$-semigroup of contractions $(\overline{T}_t)_{t>0}$ on $L^1(\R^d, \mu)$ whose generator extends $(L, C_0^{\infty}(\R^d))$, where
$$
Lf= \frac12 \text{trace}(\frac{1}{\psi}A \nabla^2 f) +\langle \mathbf{G}, \nabla f \rangle, \quad f \in C_0^{\infty}(\R^d).
$$
Through Riesz–Thorin interpolation, $(\overline{T}_t)_{t>0}$ restricted to $L^1(\R^d, \mu)_b$ can be extended to a sub-Markovian $C_0$-semigroup of contractions on each $L^r(\R^d, \mu)$, $r \in [1, \infty)$ and to a sub-Markovian semigroup of contractions on $L^{\infty}(\R^d, \mu)$. We donote all these by $(T_t)_{t>0}$. Denote by $(G_{\alpha})_{\alpha>0}$ the sub-Markovian $C_0$-resolvent of contractions on each $L^{r}(\R^d, \mu)$, $r \in [1, \infty)$ and the sub-Markovian resolvent of contractions on $L^{\infty}(\R^d, \mu)$ associated with $(T_t)_{t>0}$, i.e. 
$$
G_{\alpha} f := \int_0^{\infty} e^{-\alpha s} T_s f  ds, \; \qquad f \in \cup_{r \in [1, \infty] } L^r(\R^d, \mu).
$$
Then by \cite[Theorems 5, 6]{LT19de}, there exist $(R_{\alpha})_{\alpha>0}$ and $(P_t)_{t>0}$ such that for any $\alpha, t>0$ and $f \in \mathcal{B}_b(\R^d)$
$$
R_{\alpha} f \in C(\R^d), \;\; P_{\cdot} f \in C(\R^d \times (0, \infty)) \;\; \text{ and }\;\; R_{\alpha} f = G_{\alpha} f, \;\; \;  P_t g =T_t g \quad \text{$\mu$-a.e.}
$$
Moreover, it follows from \cite[Theorem 7]{LT19de} that there exists a Hunt process 
$$
\M =\left(\Omega, \mathcal{F}, (\mathcal{F}_t)_{t \geq 0}, (X_t)_{t \geq 0}, (\P_x)_{x \in \R^d}  \right)
$$
with state space $\R^d$ and life time $\zeta= \inf \{ t \geq 0:  X_t = \Delta \}$ satisfying that
$$
\P_x \left( \{ \omega \in \Omega : X_{\cdot}(\omega) \in C([0, \infty), \R^d), \; X_t (\omega)=\Delta \;\text{ for all } t \geq \zeta   \}   \right)=1, \quad \forall x \in \R^d,
$$
where $\Delta$ is a point at infinity and that for all $f \in \mathcal{B}_b(\R^d)$,\; $t, \alpha>0$ and  $x \in \R^d$, 
\begin{equation} \label{resrel}
P_t f(x) = \mathbb{E}_x[f(X_t)], \quad R_{\alpha} f(x) = \E_x \left[ \int_0^{\infty} e^{-\alpha s} f(X_s) ds\right], 
\end{equation}
where $\E_x$ is the expectation with respect to $\P_x$ defined on $(\Omega, \mathcal{F})$. Under the assumptions {\bf (H1)} and {\bf (H2)}, using a similar method to \cite[Lemma 3.26]{LT20}, we obtain that  $\M$ is non-explosive, i.e. $\P_x(\zeta=\infty)=1$ for all  $x \in \R^d$ (equivalently, $(T_t)_{t>0}$ is conservative, i.e. $T_t 1_{\R^d}=1$, $\mu$-a.e. for all $t>0$). 
As a direct consequence of \cite[Theorems 8, 12, Lemma 2 and Remark 2]{LT19de} and the above, we then obtain the following theorem.
\begin{theo} \label{theo:2.1}
Assume {\bf (H1)}--{\bf (H2)} and let $\M =  (\Omega, \F, (\F_t)_{t \ge 0}, (X_t)_{t \ge 0}, (\P_x)_{x \in \R^d})$ be the Hunt process defined above. Then there exists an $(\mathcal{F}_t)_{t\geq0}$-Brownian motion $(W_t)_{t \geq 0}$ such that
for any $y \in \R^d$, $(\Omega, \F, \P_{y}, (\F_t)_{t \ge 0}, (X_t)_{t \ge 0}, (W_t)_{t \ge 0})$ is a weak solution to \eqref{sdes} satisfying
$$
\int_0^{\infty} 1_{\{\sqrt{\frac{1}{\psi}}=0\}}(X_s) ds =0, \quad \text{ $\P_{y}$-a.s}.
$$
Furthermore if for some $y \in \R^d$ there exists a weak solution $(\widetilde{\Omega}, \widetilde{\F}, \widetilde{\P}_{y}, (\widetilde{\F}_t)_{t \ge 0}, (\widetilde{X}_t)_{t \ge 0}, (\widetilde{W}_t)_{t \ge 0})$ to \eqref{sdes} such that
\begin{equation} \label{zerospen}
\int_0^{\infty} 1_{\{\sqrt{\frac{1}{\psi}}=0\}}(\widetilde{X}_s) ds =0, \quad \text{ $\widetilde{\P}_{y}$-a.s.},
\end{equation}
then
$$
\P_y \circ X^{-1} = \widetilde{\P}_y \circ \widetilde{X}^{-1}, \quad \text{ on } \mathcal{B}(C([0, \infty), \R^d)).
$$
\end{theo}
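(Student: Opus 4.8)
The plan is to treat the statement in two halves---weak existence with the zero-time property, and uniqueness in law among all such solutions---both resting on the analytically constructed Hunt process $\M$ and on the identification of its generator with $(L, C_0^{\infty}(\R^d))$ on the infinitesimally invariant measure $\mu=\rho\psi\,dx$. Throughout I would exploit the relations \eqref{resrel} tying $\M$ to the semigroup $(P_t)_{t>0}$ and the resolvent $(R_{\alpha})_{\alpha>0}$, together with the conservativeness guaranteed by {\bf (H2)}.

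For the existence part, I would first apply the Fukushima-type decomposition available for $\M$ in the generalized Dirichlet form framework of \cite[Theorem 8]{LT19de} to the coordinate projections $p_i(x)=x_i$, for which $Lp_i=g_i$. Under each $\P_y$ this produces a decomposition $X_t^i=y_i+M_t^i+\int_0^t g_i(X_s)\,ds$ with $(M_t^i)_{t\ge 0}$ a continuous local martingale. The next step is to compute the mutual quadratic variations; the square-field (carr\'e du champ) operator associated with $L$ gives $\langle M^i,M^j\rangle_t=\int_0^t \big(\tfrac{1}{\psi}A\big)_{ij}(X_s)\,ds$. Because $\tfrac{1}{\psi}A=\big(\sqrt{\tfrac{1}{\psi}}\,\sigma\big)\big(\sqrt{\tfrac{1}{\psi}}\,\sigma\big)^T$, a martingale representation theorem, possibly on an enlarged probability space, then yields a $d$-dimensional $(\F_t)$-Brownian motion $(W_t)_{t\ge 0}$ with $M_t=\int_0^t \sqrt{\tfrac{1}{\psi}}(X_s)\,\sigma(X_s)\,dW_s$, so that $(\Omega,\F,\P_y,(\F_t),(X_t),(W_t))$ solves \eqref{sdes}. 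Finally, the zero-time property follows from $\mu(\{\sqrt{1/\psi}=0\})=0$: since $1/\psi\in L^{\infty}_{loc}$ with $\psi>0$ a.e., the set $\{\sqrt{1/\psi}=0\}$ is Lebesgue-null and hence $\mu$-null as $\mu\ll dx$; applying $R_{\alpha}$ to $1_{\{\sqrt{1/\psi}=0\}}$ and using that $R_{\alpha}$ sends a $\mu$-null nonnegative function to a continuous function vanishing $\mu$-a.e., hence identically, \eqref{resrel} forces $\E_y[\int_0^{\infty} e^{-\alpha s}1_{\{\sqrt{1/\psi}=0\}}(X_s)\,ds]=0$ and therefore $\int_0^{\infty} 1_{\{\sqrt{1/\psi}=0\}}(X_s)\,ds=0$ $\P_y$-a.s.

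For uniqueness in law, I would start from an arbitrary weak solution $\widetilde{X}$ satisfying \eqref{zerospen} and apply It\^o's formula to $f(\widetilde{X}_t)$ for $f\in C_0^{\infty}(\R^d)$. The zero-time property is exactly what guarantees that the degeneracy set contributes nothing to the time integrals, so that $f(\widetilde{X}_t)-f(y)-\int_0^t Lf(\widetilde{X}_s)\,ds$ is a martingale; that is, $\widetilde{X}$ solves the martingale problem for $(L,C_0^{\infty}(\R^d))$. Taking expectations shows that the one-dimensional marginals $\nu_t:=\widetilde{\P}_y\circ\widetilde{X}_t^{-1}$ satisfy $\tfrac{d}{dt}\int f\,d\nu_t=\int Lf\,d\nu_t$ with $\nu_0=\delta_y$, i.e. they solve the Fokker--Planck equation attached to $L$. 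The decisive step is to invoke the $L^1(\R^d,\mu)$-uniqueness of $(L,C_0^{\infty}(\R^d))$---equivalently, the well-posedness of the associated Fokker--Planck and martingale problems from \cite[Theorem 12, Lemma 2, Remark 2]{LT19de}---to conclude that these marginals are forced to coincide with those of $(P_t)$, so that $\nu_t=\P_y\circ X_t^{-1}$. Combined with the Markov property this pins down all finite-dimensional distributions, and hence the law on $\mathcal{B}(C([0,\infty),\R^d))$.

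The main obstacle is the uniqueness half, and specifically the passage from analytic uniqueness of the $L^1$-semigroup to uniqueness of the full path law for solutions required only not to linger on the degeneracy set. Two points need care: first, ensuring via the zero-time hypothesis that an arbitrary solution genuinely solves the martingale problem for the \emph{same} operator $L$ that generates $(P_t)$---the coefficients are defined only up to the Lebesgue-null degeneracy set, so one must verify that the occupation measure of $\widetilde{X}$ ignores it, which again rests on $\mu(\{\sqrt{1/\psi}=0\})=0$ together with an occupation-time estimate; and second, lifting marginal uniqueness to path-law uniqueness, for which a superposition-type principle combined with uniqueness of the forward equation, as established in \cite{LT19de}, is the clean route. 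Assembling these ingredients, together with the conservativeness from {\bf (H2)}, yields both conclusions of the theorem.
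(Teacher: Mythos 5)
Your proposal cannot be checked line-by-line against an internal argument, because the paper does not actually prove Theorem \ref{theo:2.1} from scratch: it is stated as a direct consequence of \cite[Theorems 8, 12, Lemma 2 and Remark 2]{LT19de} together with the non-explosion of $\M$ under {\bf (H1)}--{\bf (H2)}, obtained by the method of \cite[Lemma 3.26]{LT20}. Your existence half is a faithful reconstruction of what those citations deliver: the Fukushima-type decomposition of the coordinate functions under $\P_y$, the identification $\langle M^i,M^j\rangle_t=\int_0^t\big(\tfrac{1}{\psi}A\big)_{ij}(X_s)\,ds$ via the square field operator, martingale representation, and the vanishing occupation time of the Lebesgue- (hence $\mu$-) null set $\{\sqrt{1/\psi}=0\}$ through $R_\alpha$ and \eqref{resrel}. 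The only caveat there is that you assert conservativeness rather than derive it; the paper at least points to the localization argument of \cite[Lemma 3.26]{LT20}, which is where the log-growth condition \eqref{conscondit} of {\bf (H2)} actually enters.

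The uniqueness half has a genuine gap where it departs from citation. The sentence ``Combined with the Markov property this pins down all finite-dimensional distributions'' is circular: the arbitrary weak solution $\widetilde{X}$ is precisely \emph{not} known to be Markov, and lifting one-dimensional marginal uniqueness to uniqueness of the full path law is the hard part. The repair is the Stroock--Varadhan conditioning argument (or the superposition route you gesture at): one must check that the regular conditional probabilities of $\widetilde{\P}_y$ given $\widetilde{\F}_s$ again produce solutions which inherit the zero-time property \eqref{zerospen} and whose marginals lie in the uniqueness class of the Fokker--Planck equation; this is exactly the content of \cite[Theorem 12]{LT19de}, so in deferring to it you are not giving an independent proof but doing what the paper does. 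Two further inaccuracies: first, for the arbitrary solution $\widetilde{X}$, the occupation measure ignoring the degeneracy set does not ``rest on $\mu(\{\sqrt{1/\psi}=0\})=0$'' --- it is precisely the hypothesis \eqref{zerospen}, while off the degeneracy set one needs a Krylov-type or time-change argument exploiting the local non-degeneracy of $\tfrac{1}{\psi}A$; the $\mu$-nullity argument via $R_\alpha$ is available only for the constructed process $\M$, for which \eqref{resrel} holds. Second, within this paper's architecture such a Krylov estimate for \emph{arbitrary} weak solutions (Corollary \ref{kryloves}) is itself deduced from the uniqueness in law of Theorem \ref{theo:2.1}, so invoking it at this stage would be circular; in \cite{LT19de} this difficulty is resolved by other means, which is another reason the hard step cannot simply be absorbed into ``an occupation-time estimate.''
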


\begin{rem} \;
In Theorem \ref{theo:2.1}, the assumption $d \geq 3$ of {\bf (H1)} can be replaced by $d=2$. However, we point out that $d \geq 3$ of {\bf (H1)} is needed for all other results of this paper.  Moreover, in order to obtain Theorem \ref{theo:2.1}, the condition in {\bf (H1)} that $\sigma_{ij}$ is continuous for all $1 \leq i,j \leq d$ can be replaced by the condition that $\sigma_{ij}$ is locally bounded and measurable for all $1 \leq i,j \leq d$. 
\end{rem}

\section{Krylov type estimates} \label{sec3}
\begin{theo} \label{krylovest1}
Assume {\bf (H1)} and {\bf (H3)} and let $E$ be as in {\bf (H3)}. Let  $\widetilde{q} \in (\frac{d}{2}, \infty)$, $y \in E$, $t>0$ and $B$ be an open ball in $\R^d$. Let  $\M =  (\Omega, \F, (\F_t)_{t \ge 0}, (X_t)_{t \ge 0}, (\P_x)_{x \in \R^d})$ be the Hunt process of Section \ref{frame}. Then for any $f \in L^{\widetilde{q}}(\R^d)_0$ with $\text{supp}(f dx) \subset B$, there exists a constant $C_{y,B, \widetilde{q}}>0$ which is independent of $f$ and $t>0$ such that
$$
\mathbb{E}_{y} \left[  \int_0^t f(X_s) ds \right] \leq e^{t} C_{y,B, \widetilde{q}} \|f\|_{L^{\widetilde{q}}(\R^d)}.
$$
\end{theo}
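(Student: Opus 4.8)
The plan is to dominate the time integral by the $1$-resolvent evaluated at $y$, and then to bound the resolvent pointwise by combining the elliptic regularity estimate for the resolvent \eqref{rescontest} with the $L^r(\mu)$-contraction property of $(G_\alpha)_{\alpha>0}$. Since $\mathbb{E}_y\big[\int_0^t f(X_s)\,ds\big] \le \mathbb{E}_y\big[\int_0^t |f|(X_s)\,ds\big]$ and $\||f|\|_{L^{\widetilde q}}=\|f\|_{L^{\widetilde q}}$, I may assume $f\ge 0$. For such $f$, using $f(X_s)\le e^{t-s}f(X_s)$ on $[0,t]$ and extending the integral to $[0,\infty)$,
$$
\mathbb{E}_y\Big[\int_0^t f(X_s)\,ds\Big] \;\le\; e^{t}\,\mathbb{E}_y\Big[\int_0^{\infty} e^{-s} f(X_s)\,ds\Big] \;=\; e^{t}\,R_1 f(y),
$$
which for bounded $f$ supported in $B$ is exactly \eqref{resrel}, and for general $f\ge 0$ in $L^{\widetilde q}$ with support in $B$ follows by truncating $f_n:=f\wedge n$ and letting $n\to\infty$ (monotone convergence on the left, $\|f_n\|_{L^{\widetilde q}}\le\|f\|_{L^{\widetilde q}}$ on the right). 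Thus it suffices to show $R_1 f(y)\le C_{y,B,\widetilde q}\|f\|_{L^{\widetilde q}}$.

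Since $y\in E$ and $E$ is open, I fix open balls $U'\Subset U$ with $y\in U'$ and $\overline U\subset E$. By \textbf{(H1)} and \textbf{(H3)}, on $U$ both $\tfrac1\psi$ and $\psi$ are bounded away from $0$ and $\infty$ and $A$ is uniformly elliptic, so $L$ is \emph{uniformly} elliptic on $U$ (in the divergence form associated with $\mu$, whose lower-order coefficients inherit $L^{2d+2}_{loc}$-integrability, hence lie in some $L^p$ with $p>d$, from the $H^{1,2d+2}_{loc}$-regularity of $A$ and $\rho$); moreover $\mu=\rho\psi\,dx$ is comparable to Lebesgue measure on $U$. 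The continuous version of $u:=R_1 f=G_1 f$ is a nonnegative weak solution of $(1-L)u=f$ on $U$, so the resolvent regularity estimate \eqref{rescontest}, i.e.\ the local boundedness valid for $\widetilde q>\tfrac d2$, yields
$$
R_1 f(y) \;\le\; \sup_{U'} u \;\le\; C_1\big(\|u\|_{L^1(U,\mu)}+\|f\|_{L^{\widetilde q}(U)}\big),
$$
with $C_1=C_1(y,U,U',\widetilde q)$ independent of $f$.

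It remains to control both terms by $\|f\|_{L^{\widetilde q}}$. The second is trivially bounded by $\|f\|_{L^{\widetilde q}(\R^d)}$. For the first, the $L^1(\mu)$-contraction of $(G_\alpha)_{\alpha>0}$ gives $\|u\|_{L^1(U,\mu)}\le \|G_1 f\|_{L^1(\R^d,\mu)}\le\|f\|_{L^1(\R^d,\mu)}$; and since $\mathrm{supp}(f)\subset B$, $\rho\in C(\R^d)$, and $\psi\in L^q(B)$ with $q>2d+2$, Hölder's inequality gives
$$
\|f\|_{L^1(\mu)}=\int_B |f|\,\rho\psi\,dx \;\le\; \|\rho\|_{L^{\infty}(B)}\,\|\psi\|_{L^{q}(B)}\,\|f\|_{L^{q'}(B)} \;\le\; C_2(B)\,\|f\|_{L^{\widetilde q}(\R^d)},
$$
where $q'=\tfrac{q}{q-1}<\widetilde q$ (using $q>2d+2\ge 8$ and $\widetilde q>\tfrac d2\ge\tfrac32$), so the last step follows from the boundedness of $B$. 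Combining the three displays yields $R_1 f(y)\le C_{y,B,\widetilde q}\|f\|_{L^{\widetilde q}}$ and hence the asserted bound with the explicit factor $e^{t}$. Note that the support location of $f$ relative to $y$ is irrelevant: its effect on $u$ near $y$ is captured entirely through the \emph{global} $L^1(\mu)$-norm $\|G_1f\|_{L^1(\mu)}$, which is why no hitting-time argument is needed.

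The crux is the elliptic regularity step, i.e.\ verifying that \eqref{rescontest} genuinely applies: one must confirm that $R_1 f$ is a weak solution of $(1-L)u=f$ on $U$ with the degeneracy of $\tfrac1\psi$ playing no role there (this is precisely where $\overline U\subset E$ and \textbf{(H3)} enter), that the lower-order coefficients produced by writing $L$ in $\mu$-divergence form have the integrability demanded by the De Giorgi--Nash--Moser local boundedness theorem (supplied by the $H^{1,2d+2}_{loc}$-assumptions of \textbf{(H1)}), and that $\widetilde q>\tfrac d2$ is the sharp threshold for the Krylov-type conclusion. By contrast, the resolvent domination and the single Hölder estimate converting $\|\cdot\|_{L^1(\mu)}$ into $\|\cdot\|_{L^{\widetilde q}(dx)}$ are routine.
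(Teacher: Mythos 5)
Your proof is correct and takes essentially the same route as the paper: you dominate the time integral by $e^{t}R_1|f|(y)$ via \eqref{resrel}, bound the resolvent near $y$ by the local elliptic regularity estimate \eqref{rescontest} (available on $E$ thanks to {\bf (H1)} and {\bf (H3)}), and absorb the resulting global $L^1(\R^d,\mu)$-term using the resolvent contraction together with H\"older's inequality, exactly as in the paper's proof. The only deviations are cosmetic: your H\"older splitting uses the exponents $q$ and $q'=q/(q-1)$ where the paper uses $2d+2$ and $\tfrac{2d+2}{2d+1}$, you justify the local boundedness by De Giorgi--Nash--Moser theory directly where the paper cites the regularity theorems of Lee--Trutnau, and your approximation steps (reduction to $f\geq 0$, truncation, density) are arranged in a slightly different order.
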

\begin{proof}
Let $(G_{\alpha})_{\alpha>0}$ be the resolvent defined on $\cup_{r \in [1, \infty]}L^r(\R^d, \mu)$ as in Section \ref{frame}. 
Since $E$ is an open subset of $\R^d$ and $y \in E$, there exist open balls $U_1$ and $U_2$ such that $y \in U_1 \subset \overline{U}_1 \subset U_2 \subset \overline{U}_2 \subset E$. First assume $f \in C_0^{\infty}(\R^d)$ with $\text{supp}(f) \subset B$.
 Then by the proof of \cite[Theorem 5]{LT19de} (cf. \cite[Theorem 3 (c)]{LT19de}), $G_{1} f \in H^{1,2}(U_2)$ and 
\begin{eqnarray*}
&&\int_{U_2} \big \langle \frac12 \rho A \nabla G_{1}f, \nabla \varphi \big \rangle dx - \int_{U_2} \langle \rho \psi \mathbf{B}, \nabla G_{1} f \rangle \varphi\, dx  +\int_{U_2} (\alpha \rho \psi G_{1} f) \,\varphi dx  \\
&&\qquad = \int_{U_2} (\rho \psi f) \,\varphi dx, \qquad \; \forall \varphi \in C_0^{\infty}(U_2), \label{feppwikmee}
\end{eqnarray*}
where $\mathbf{B}=\mathbf{G}-\frac{1}{2\psi}(\nabla A +\frac{1}{\rho}A \nabla \rho)$. Note that  $d \geq 3$ implies $1-\frac{1}{2d+2}>\frac{2}{d}>\frac{1}{\widetilde{q}}$. By \cite[Theorems 2, 5]{LT19de}, {\bf (H3)} and the $L^1(\R^d, \mu)$-contraction property of $(\alpha G_{\alpha})_{\alpha>0}$, there exists a constant $C_1>0$ which only depends on the coefficients $A$, $\mathbf{G}$, $\psi$ and $U_1$, $U_2$, $\widetilde{q}$, $d$ such that
\begin{eqnarray}
\|R_{1} f \|_{C(\overline{U}_1)} &\leq& C_1 \big( \|G_{1} f \|_{L^1(U_2)}+ \| \rho \psi  f\|_{L^{\widetilde{q}}(U_2)}    \big) \nonumber \\ 
&\leq& C_1 \big(  (\inf_{U_2} \rho \psi)^{-1} \|G_{1} f \|_{L^1(\R^d, \mu)}+ (\sup_{U_2} \rho \psi) \| f\|_{L^{\widetilde{q}}(\R^d)}    \big) \nonumber \\
&\leq& C_1 \big(  (\inf_{U_2} \rho \psi)^{-1} \|  f \|_{L^1(\R^d, \mu)}+(\sup_{U_2} \rho \psi)  \| f\|_{L^{\widetilde{q}}(\R^d)}    \big) \nonumber \\
&\leq& C_1 \big(  (\inf_{U_2} \rho \psi)^{-1} \| \rho \psi \|_{L^{2d+2}(B)} \|  f \|_{L^{\frac{2d+2}{2d+1}}(B)}+(\sup_{U_2} \rho \psi)  \| f\|_{L^{\widetilde{q}}(\R^d)}    \big) \nonumber \\
&\leq& C_1  C_2 \|f\|_{L^{\widetilde{q}}(\R^d)}, \label{rescontest}
\end{eqnarray}
where $C_2:=  (\inf_{U_2} \rho \psi)^{-1} \| \rho \psi \|_{L^{2d+2}(B)} dx(B)^{\frac{2d+1}{2d+2}-\frac{1}{\widetilde{q}}} +  \sup_{U_2} \rho \psi$. Using the denseness of $C_0^{\infty}(B)$ in $L^{\widetilde{q}}(B)$, \eqref{rescontest} extends to $f \in \mathcal{B}_b(\R^d)_0$ with $\text{supp}(fdx) \subset B$. By \eqref{resrel} and \eqref{rescontest}, it holds for any $f \in \mathcal{B}_b(\R^d)_0$ with $\text{supp}(fdx) \subset B$
\begin{eqnarray*}
\E_y \left[ \int_0^t f(X_s) ds   \right]  &\leq& e^{t} \E_y \left[ \int_0^{\infty} e^{- s}  |f|(X_s) ds  \right] \\
&=& e^t R_1 |f| (y) \leq e^{t} \|R_{1} |f| \|_{C(\overline{U}_1)}  \leq e^{t} C_1  C_2 \|f\|_{L^{\widetilde{q}}(\R^d)}.
\end{eqnarray*}
Finally, using monotone approximation, the assertion follows.
\end{proof}
\text{} \\
As a direct consequence of Theorems \ref{krylovest1}, \ref{theo:2.1}, we obtain the following.
\begin{cor} \label{kryloves}
Assume {\bf (H1)}--{\bf (H3)} and let $E$ be as in {\bf (H3)}. Let $\widetilde{q} \in (\frac{d}{2}, \infty)$, $y \in E$, $t>0$ and $B$ be an open ball in $\R^d$. Let $(\widetilde{\Omega}, \widetilde{\F}, \widetilde{\P}_{y}, (\widetilde{\F}_t)_{t \ge 0}, (\widetilde{X}_t)_{t \ge 0}, (\widetilde{W}_t)_{t \ge 0})$ be a weak solution to \eqref{sdes} satisfying \eqref{zerospen}. Then for any $f \in L^{\widetilde{q}}(\R^d)_0$ with $\text{supp}(f dx) \subset B$, there exists a constant $C_{y,B, \widetilde{q}}>0$ which is independent of $f$ and $t>0$ such that
$$
\widetilde{\mathbb{E}}_{y} \left[  \int_0^t f(\widetilde{X}_s) ds \right] \leq e^{t} C_{y,B, \widetilde{q}} \|f\|_{L^{\widetilde{q}}(\R^d)},
$$
where $\widetilde{\mathbb{E}}_y$ is the expectation with respect to $\widetilde{\mathbb{P}}_y$ defined on $(\widetilde{\Omega}, \widetilde{\F})$.
\end{cor}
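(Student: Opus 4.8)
The plan is to reduce the statement to Theorem \ref{krylovest1} by transporting that estimate along the uniqueness in law established in Theorem \ref{theo:2.1}. Since {\bf (H1)}--{\bf (H3)} are assumed, in particular {\bf (H1)}--{\bf (H2)} hold, so Theorem \ref{theo:2.1} applies to the canonical Hunt process $\M = (\Omega, \F, (\F_t)_{t\ge0}, (X_t)_{t\ge0}, (\P_x)_{x\in\R^d})$ of Section \ref{frame}: for $y \in E$ the process $\M$ started at $y$ is itself a weak solution to \eqref{sdes} obeying the zero-time condition, and every other weak solution $\widetilde{X}$ satisfying \eqref{zerospen} has the same law on path space, i.e. $\P_y \circ X^{-1} = \widetilde{\P}_y \circ \widetilde{X}^{-1}$ on $\mathcal{B}(C([0,\infty),\R^d))$. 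Both $X$ and $\widetilde{X}$ have continuous sample paths, which is exactly what makes this identity of laws usable for occupation-time functionals.

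First I would record that for every bounded Borel function $g$ with $\text{supp}(g\,dx) \subset B$ the occupation functional $\Phi_g(\omega) := \int_0^t g(\omega_s)\,ds$ is Borel measurable on $C([0,\infty),\R^d)$. For $g$ bounded and continuous this is immediate, since $\Phi_g$ is then even continuous for the topology of uniform convergence on compacts; the general bounded Borel case follows by a monotone class argument. Consequently, by the identity of laws,
$$
\widetilde{\E}_y\Big[\int_0^t g(\widetilde{X}_s)\,ds\Big] = \E_y\Big[\int_0^t g(X_s)\,ds\Big],
$$
and applying Theorem \ref{krylovest1} to $|g|$ yields $\widetilde{\E}_y[\int_0^t g(\widetilde{X}_s)\,ds] \le e^{t} C_{y,B,\widetilde{q}}\|g\|_{L^{\widetilde{q}}(\R^d)}$ with precisely the constant furnished by that theorem, which is independent of $g$ and of $t$.

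To pass from bounded Borel $g$ to an arbitrary $f \in L^{\widetilde{q}}(\R^d)_0$ with $\text{supp}(f\,dx) \subset B$, I would use monotone approximation: for $f \ge 0$ fix a Borel representative, set $g_n := \min(f,n)\,1_B$, and let $n \to \infty$, using monotone convergence for $\int_0^t g_n(\widetilde{X}_s)\,ds \uparrow \int_0^t f(\widetilde{X}_s)\,ds$ under $\widetilde{\P}_y$ on the left and $\|g_n\|_{L^{\widetilde{q}}(\R^d)} \uparrow \|f\|_{L^{\widetilde{q}}(\R^d)}$ on the right; the signed case follows by applying the nonnegative estimate to $|f|$. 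The bound applied to $|f|$ in particular shows that the expected occupation measure $A \mapsto \widetilde{\E}_y[\int_0^t 1_A(\widetilde{X}_s)\,ds]$ is absolutely continuous with respect to Lebesgue measure on $B$, so that $\int_0^t f(\widetilde{X}_s)\,ds$ is well defined independently of the chosen representative of the class $f$, and the stated inequality holds.

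The substantive content is carried entirely by Theorems \ref{krylovest1} and \ref{theo:2.1}; the only point demanding care is the legitimacy of transporting the occupation-time expectation through the identity of laws, that is, verifying that $\int_0^t f(\cdot_s)\,ds$ is a bona fide, representative-independent path functional before any $L^{\widetilde{q}}$ class is inserted into it. I expect this bookkeeping, rather than any new estimate, to be the one place a careless argument could slip, and it is resolved by first proving the bound for bounded Borel $g$ and only afterwards extending to $L^{\widetilde{q}}$ via the absolute continuity that the bound itself produces.
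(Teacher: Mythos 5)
Your proposal is correct and is precisely the argument the paper intends: the paper states Corollary \ref{kryloves} as a direct consequence of Theorem \ref{krylovest1} (the estimate for the Hunt process) and Theorem \ref{theo:2.1} (uniqueness in law among solutions satisfying \eqref{zerospen}), which is exactly your transport-of-laws reduction. Your additional bookkeeping --- Borel measurability of the occupation functional, monotone extension from bounded Borel $g$ to $f \in L^{\widetilde{q}}(\R^d)_0$, and representative-independence via the absolute continuity the bound itself yields --- correctly fills in the details the paper leaves implicit.
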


\section{Pathwise uniqueness} \label{pathw}
{\rm Proof of Theorem \ref{theo:1.1}.}\;\;
Fix $y \in E$. For $k \in \{1,2 \}$, let $(\widetilde{\Omega}, \widetilde{\F}, \widetilde{\P}_{y}, (\widetilde{\F}_t)_{t \ge 0}, (\widetilde{X}^k_t)_{t \ge 0}, (\widetilde{W}_t)_{t \ge 0})$ be weak solutions to \eqref{sdes} satisfying \eqref{zerotime} and let $\widetilde{Z}_t:=\widetilde{X}_t^{1}-\widetilde{X}_t^{2}$, $t \geq 0$. Given $n \in \N$ with $n \geq 2$ and $y \in B_{n-1}$, set $\widetilde{D}^k_n:=\inf \{ t \geq 0 \,:  \widetilde{X}_t^k \in \R^d \setminus B_{n-1} \}$ and $\widetilde{D}_n:= \widetilde{D}_n^{1} \wedge \widetilde{D}_n^{2}$. Let $\widehat{\sigma}=(\widehat{\sigma}_{ij})_{1 \leq i,j \leq d}$ be defined by $\widehat{\sigma} = \sqrt{\frac{1}{\psi}} \cdot \sigma$. By It\^{o}'s formula, for any $t\geq 0$ it holds  $\P_y$-a.s.
\begin{eqnarray*}
\|\widetilde{Z}_{t \wedge D_n}\|^2&=& 2\int_0^{t \wedge D_n} \widetilde{Z}_s \left(\widehat{\sigma}(\widetilde{X}^{1}_s)-\widehat{\sigma}(\widetilde{X}^{2}_s \right) d\widetilde{W}_s + \int_0^{t\wedge D_n} \| \widehat{\sigma}(\widetilde{X}_s^{1})- \widehat{\sigma}(\widetilde{X}_s^{2}) \|^2 ds\\
&&\qquad   +2\int_0^{t \wedge D_n} \langle \widetilde{Z}_s, \mathbf{G}(\widetilde{X}_s^{1})- \mathbf{G}(\widetilde{X}_s^{2}) \rangle ds.
\end{eqnarray*}
Let $\widehat{\sigma}^n_{ij} \in H^{1,2\widetilde{q}}(\R^d)_0$ be an extension of $1_{B_{n}}\widehat{\sigma}_{ij} \in H^{1,2\widetilde{q}}(B_{n})$ satisfying $\text{supp}(\widehat{\sigma}_{ij}^n dx) \subset B_{n+1}$  for each $i,j \in \{1,2 \ldots d\}$. Likewise, let $ g^n_i \in H^{1,\widetilde{q}}(\R^d)_0$ be an extension of $1_{B_n}g_i$ satisfying $\text{supp}(g_i^n) \subset B_{n+1}$ for each $i \in \{1,2 \ldots d\}$ and let $\mathbf{G}^n=(g^n_1, \dots, g^n_d)$. Given $m \in \N$, let $\eta_{m} \in C_0^{\infty}(B_{1/m})$ be a standard mollifier on $\R^d$. Define
\begin{eqnarray*}
\widehat{\sigma}^{n,m}_{ij}:= \widehat{\sigma}^n_{ij}* \eta_m, \;\; g^{n,m}_i:= g^n_i* \eta_m, \quad \text{ for } i,j \in \{1,2, \ldots, d\},
\end{eqnarray*}
where $f*g$ is the convolution of $f$ and $g$. Let $\widehat{\sigma}^{n,m}=(\widehat{\sigma}^{n,m}_{ij})_{1 \leq i,j \leq d}$ and $\mathbf{G}^{n,m}=(g^{n,m}_1, \ldots, g^{n,m}_d)$.
Set 
$$
N_t:=M_t+A^{(1)}_t+A^{(2)}_t \; \text{ and }\; N^n_t:=N_{t \wedge \widetilde{D}_n} \; \text{ for } t \geq 0, 
$$
where
\begin{eqnarray*}
&&M_t:= 2 \int_0^t  \frac{ \widetilde{Z}_s \left(\widehat{\sigma}(\widetilde{X}^{1}_s)-\widehat{\sigma}(\widetilde{X}^{2}_s ) \right)}{\|\widetilde{Z}_s\|^2} d\widetilde{W}_s, \;\;\; \;\; A^{(1)}_t:= \int_0^{t} \frac{\| \widehat{\sigma}(X_s^{1})- \widehat{\sigma}(\widetilde{X}_s^{2}) \|^2}{\|\widetilde{Z}_s\|^2} ds, \;\;\; \\
&&\qquad \qquad A^{(2)}_t:=  2\int_0^{t}  \frac{\langle \widetilde{Z}_s, \mathbf{G}(\widetilde{X}_s^{1})- \mathbf{G}(\widetilde{X}_s^{2}) \rangle}{\|\widetilde{Z}_s\|^2} ds, \qquad t \geq 0, 
\end{eqnarray*}
where $\frac{0}{0}:=0$. Then for all $t\geq0$ it holds 
\begin{equation} \label{undereq}
\|\widetilde{Z}_{t \wedge D_n}\|^2   =  \int_0^{t} \|\widetilde{Z}_{s \wedge D_n} \|^2 dN^n_{s}, \quad \; \text{ $\widetilde{\P}_y$-a.s. }
\end{equation}
Let $\widetilde{\E}_y$ be the expectation with respect to $\widetilde{\P}_y$ defined on $(\widetilde{\Omega}, \widetilde{\F})$.
By It\^{o}-isometry and Fatou's Lemma, it holds
\begin{eqnarray*}
\frac14\widetilde{\E}_y \left[ |M_{t\wedge \widetilde{D}_n}|^2 \right]  &\leq& \widetilde{\E}_y \left[ \int_0^{t \wedge \widetilde{D}_n} \frac{\| \widehat{\sigma}^n(\widetilde{X}_s^{1})- \widehat{\sigma}^n(\widetilde{X}_s^{2}) \|^2}{\|\widetilde{Z}_s\|^2} ds \right] \\
&\leq& \liminf_{\delta \rightarrow 0+} \widetilde{\E}_y \left[ \int_0^{t \wedge \widetilde{D}_n}  \frac{\| \widehat{\sigma}^n(\widetilde{X}_s^{1})- \widehat{\sigma}^n(\widetilde{X}_s^{2}) \|^2}{\|\widetilde{Z}_s\|^2} 1_{\{ \|\widetilde{Z}_s\| > \delta \}}ds \right].
\end{eqnarray*}
Using triangle inequality and Jensen's inequality, we get
\begin{eqnarray*}
&& \widetilde{\E}_y \left[ \int_0^{t \wedge \widetilde{D}_n}  \frac{\| \widehat{\sigma}^n(\widetilde{X}_s^{1})- \widehat{\sigma}^n(\widetilde{X}_s^{2}) \|^2}{\|\widetilde{Z}_s\|^2} 1_{\{ \|\widetilde{Z}_s\| > \delta \}}ds \right]  \\
&& \quad \leq 3  \Bigg( \limsup_{m \rightarrow \infty}\widetilde{\E}_y \left[ \int_0^{t \wedge \widetilde{D}_n}  \frac{\| \widehat{\sigma}^n(\widetilde{X}_s^{1})- \widehat{\sigma}^{n,m}(\widetilde{X}_s^{1}) \|^2}{\|\widetilde{Z}_s\|^2} 1_{\{ \|\widetilde{Z}_s\| > \delta \}}ds \right] \\
&& \qquad \qquad + \sup_{m \in \N} \widetilde{\E}_y \left[ \int_0^{t \wedge \widetilde{D}_n}  \frac{\| \widehat{\sigma}^{n,m}(\widetilde{X}_s^{1})- \widehat{\sigma}^{n,m}(\widetilde{X}_s^{2}) \|^2}{\|\widetilde{Z}_s\|^2} 1_{\{ \|\widetilde{Z}_s\| > \delta \}}ds \right] \\
&& \qquad \qquad \qquad+ \limsup_{m \rightarrow \infty} \widetilde{\E}_y \left[ \int_0^{t \wedge \widetilde{D}_n}  \frac{\| \widehat{\sigma}^{n,m}(\widetilde{X}_s^{2})- \widehat{\sigma}^{n}(\widetilde{X}_s^{2}) \|^2}{\|\widetilde{Z}_s\|^2} 1_{\{ \|\widetilde{Z}_s\| > \delta \}}ds \right]  \Bigg).
\end{eqnarray*}
Using Corollary \ref{kryloves},
\begin{eqnarray*}
&& \limsup_{m \rightarrow \infty} \widetilde{\E}_y \left[ \int_0^{t \wedge \widetilde{D}_n}  \frac{\| \widehat{\sigma}^n(\widetilde{X}_s^{1})- \widehat{\sigma}^{n,m}(\widetilde{X}_s^{1}) \|^2}{\|\widetilde{Z}_s\|^2} 1_{\{ \|\widetilde{Z}_s\| > \delta \}}ds \right]  \\
&& \quad \leq \limsup_{m \rightarrow \infty} \frac{1}{\delta^2} \widetilde{\E}_y \left[ \int_0^t  \| \widehat{\sigma}^n(\widetilde{X}_s^{1})- \widehat{\sigma}^{n,m}(\widetilde{X}_s^{1}) \|^2ds \right] \qquad \text{}\\
&& \quad \leq\limsup_{m \rightarrow \infty}   \frac{C_{y,B_{n+1}, \widetilde{q}} }{\delta^2} \sum_{i,j=1}^d \| (\widehat{\sigma}^n_{ij}- \widehat{\sigma}^{n,m}_{ij}) \|_{L^{2\widetilde{q}}(\R^d)}^2 = 0,
\end{eqnarray*} 
where $C_{y,B_{n+1}, \widetilde{q}}>0$ is a constant as in Corollary \ref{kryloves}. In the same way as above, we also get
$$
\limsup_{m \rightarrow \infty} \widetilde{\E}_y \left[ \int_0^{t \wedge \widetilde{D}_n}  \frac{\| \widehat{\sigma}^{n,m}(\widetilde{X}_s^{2})- \widehat{\sigma}^{n}(\widetilde{X}_s^{2}) \|^2}{\|\widetilde{Z}_s\|^2} 1_{\{ \|\widetilde{Z}_s\| > \delta \}}ds \right] =0.
$$
By Proposition \ref{maxth} and Corollary \ref{kryloves},
\begin{eqnarray}
&& \sup_{m \in \N} \widetilde{\E}_y \left[ \int_0^{t \wedge \widetilde{D}_n}  \frac{\| \widehat{\sigma}^{n,m}(\widetilde{X}_s^{1})- \widehat{\sigma}^{n,m}(\widetilde{X}_s^{2}) \|^2}{\|\widetilde{Z}_s\|^2} 1_{\{ \|\widetilde{Z}_s\| > \delta \}}ds \right]  \nonumber \\
&&\leq  \sup_{m \in \N} \sum_{i, j=1}^d \widetilde{c}_d\, \widetilde{\E}_y \left[  \int_0^{t}  \Big( 1_{B_{n}}(\widetilde{X}^1_s) \mathcal{M}\|\nabla \widehat{\sigma}_{ij}^{n,m}\| (\widetilde{X}_s^{1})  +  1_{B_{n}}(\widetilde{X}^2_s)
\mathcal{M}\|\nabla \widehat{\sigma}_{ij}^{n,m}\| (\widetilde{X}_s^{2})\Big)^2  ds \right]  \label{maxest}\\
&&\leq \sup_{m \in \N} \sum_{i,j=1}^d 2 e^{t} \,\widetilde{c}_d\, C_{y,B_{n+1}, \widetilde{q}} \left( \Big \| 1_{B_{n}}\mathcal{M}\| \nabla \widehat{\sigma}_{ij}^{n,m} \|  \Big \|^2_{L^{2\widetilde{q}}(\R^d)} +\Big \| 1_{B_{n}} \mathcal{M}\| \nabla \widehat{\sigma}_{ij}^{n,m} \|  \Big \|^2_{L^{2\widetilde{q}}(\R^d)} \right)  \nonumber \\
&&\leq 4 e^{t}c^2_{d, 2\widetilde{q}}\, \widetilde{c}_d\, C_{y,B_{n+1}, \widetilde{q}} \sup_{m \in \N} \sum_{i,j=1}^d \| \nabla \widehat{\sigma}_{ij}^{n,m}\|^2_{L^{2\widetilde{q}}(\R^d)} \leq 4 e^{t} c^2_{d, 2\widetilde{q}}\, \widetilde{c}_d\, C_{y,B_{n+1}, \widetilde{q}}   \sum_{i,j=1}^d \| \nabla \widehat{\sigma}_{ij}^{n}\|^2_{L^{2\widetilde{q}}(\R^d)},  \nonumber
\end{eqnarray}
where $c_{d, 2\widetilde{q}},\, \widetilde{c}_d>0$ are constants as in Proposition \ref{maxth}. Therefore, $\widetilde{\E}_y \left[ |M_{t\wedge D_n}|^2 \right] < \infty$. Analogously to the above, 
$$
\widetilde{\E}_y \left[ |A^{(1)}_{t\wedge \widetilde{D}_n}| \right]  \leq   4   e^{t} c^2_{d, 2\widetilde{q}}\, \widetilde{c}_d\, C_{y,B_{n+1}, \widetilde{q}}   \sum_{i,j=1}^d \| \nabla \widehat{\sigma}_{ij}^{n}\|^2_{L^{2\widetilde{q}}(\R^d)}< \infty.
$$
Using Fatou's Lemma,
\begin{eqnarray*}
 \frac12 \widetilde{\E}_y \left[ | A^{(2)}_{t \wedge \widetilde{D}_n} |  \right] &\leq& \widetilde{\E}_y \left[ \int_0^{t \wedge \widetilde{D}_n}\frac{\|\mathbf{G}^n(\widetilde{X}_s^{1})  -\mathbf{G}^n(\widetilde{X}_s^{2}) \| }{\|\widetilde{Z}_s \|}   ds \right]\\
&\leq& \liminf_{\delta \rightarrow 0+}  \widetilde{\E}_y \left[\int_0^{t \wedge \widetilde{D}_n} \frac{\|\mathbf{G}^n(\widetilde{X}_s^{1})  -\mathbf{G}^n(X_s^{2}) \| }{\|\widetilde{Z}_s \|}    1_{\{ \|\widetilde{Z}_s\| > \delta \}} ds \right],
\end{eqnarray*}
and again as above, we have
\begin{eqnarray*}
&& \qquad \widetilde{\E}_y \left[ \int_0^{t \wedge D_n}  \frac{\| \mathbf{G}^n(\widetilde{X}_s^{1})- \mathbf{G}^n(\widetilde{X}_s^{2}) \|}{\|\widetilde{Z}_s\|} 1_{\{ \|\widetilde{Z}_s\| > \delta \}}ds \right]  \\
&& \qquad \qquad \leq  \Bigg( \limsup_{m \rightarrow \infty}\widetilde{\E}_y \left[ \int_0^{t \wedge \widetilde{D}_n}  \frac{\| \mathbf{G}^n(\widetilde{X}_s^{1})- \mathbf{G}^{n,m}(\widetilde{X}_s^{1}) \|}{\|\widetilde{Z}_s\|} 1_{\{ \|\widetilde{Z}_s\| > \delta \}}ds \right] \\
&& \qquad \qquad \qquad \quad+ \sup_{m \in \N} \widetilde{\E}_y \left[ \int_0^{t \wedge \widetilde{D}_n}  \frac{\| \mathbf{G}^{n,m}(\widetilde{X}_s^{1})- \mathbf{G}^{n,m}(\widetilde{X}_s^{2}) \|}{\|\widetilde{Z}_s\|} 1_{\{ \|\widetilde{Z}_s\| > \delta \}}ds \right]  \\
&& \qquad \qquad \qquad \qquad + \limsup_{m \rightarrow \infty} \widetilde{\E}_y \left[ \int_0^{t \wedge \widetilde{D}_n}  \frac{\| \mathbf{G}^{n,m}(\widetilde{X}_s^{2})- \mathbf{G}^{n}(\widetilde{X}_s^{2}) \|}{\|\widetilde{Z}_s\|} 1_{\{ \|\widetilde{Z}_s\| > \delta \}}ds \right]  \Bigg). \quad \text{}
\end{eqnarray*}
Using  Corollary \ref{kryloves},
\begin{eqnarray*}
&&\limsup_{m \rightarrow \infty} \widetilde{\E}_y \left[ \int_0^{t \wedge \widetilde{D}_n}  \frac{\| \mathbf{G}^n(\widetilde{X}_s^{1})- \mathbf{G}^{n,m}(\widetilde{X}_s^{1}) \|}{\|\widetilde{Z}_s\|} 1_{\{ \|\widetilde{Z}_s\| > \delta \}}ds \right] \\
&&\leq \limsup_{m \rightarrow \infty} \frac{1}{\delta^2} \widetilde{\E}_y \left[ \int_0^t \| \mathbf{G}^n(\widetilde{X}_s^{1})- \mathbf{G}^{n,m}(\widetilde{X}_s^{1}) \| ds \right] \leq\limsup_{m \rightarrow \infty}   \frac{e^{t} C_{y,B_{n+1}, \widetilde{q}}}{\delta^2} \| \mathbf{G}^n- \mathbf{G}^{n,m} \|_{L^{\widetilde{q}}(\R^d)} = 0,
\end{eqnarray*}
and as before
$$
\limsup_{m \rightarrow \infty} \widetilde{\E}_y \left[ \int_0^{t \wedge \widetilde{D}_n}  \frac{\| \mathbf{G}^n(\widetilde{X}_s^{2})- \mathbf{G}^{n,m}(\widetilde{X}_s^{2}) \|}{\|\widetilde{Z}_s\|} 1_{\{ \|\widetilde{Z}_s\| > \delta \}}ds \right]=0.
$$
Using an estimate as in \eqref{maxest}, Proposition \ref{maxth} and Corollary \ref{kryloves}, we obtain
\begin{eqnarray*}
&&\sup_{m \in \N} \widetilde{\E}_y \left[ \int_0^{t \wedge D_n}  \frac{\| \mathbf{G}^{n,m}(\widetilde{X}_s^{1})- \mathbf{G}^{n,m}(\widetilde{X}_s^{2}) \|}{\|\widetilde{Z}_s\|} 1_{\{ \|\widetilde{Z}_s\| > \delta \}}ds \right]  \leq 2 e^{t} c_{d, 2\widetilde{q}}\, \widetilde{c}_d\, C_{y,B_{n+1}, \widetilde{q}} \sum_{i=1}^d \| \nabla g_i \|_{L^{\widetilde{q}}(\R^d)}.
\end{eqnarray*}
Therefore, $\tilde{\E}_y \left[ | A^{(2)}_{t \wedge D_n} |  \right] <\infty $.
Thus, applying \cite[page 378, (2.3) Proposition]{RYor} to \eqref{undereq}, it follows that
\begin{eqnarray*}
\|\widetilde{Z}_{t \wedge \widetilde{D}_n} \|^2  = \|\widetilde{Z}_0\|^2 \exp\left( M_{t \wedge \widetilde{D}_n} +A^{(1)}_{t \wedge \widetilde{D}_n} +A^{(2)}_{t \wedge \widetilde{D}_n}-\frac{1}{2} \langle M \rangle_{t \wedge \widetilde{D}_n}  \right) =0, \quad \widetilde{\P}_{y}\text{-a.s.}
\end{eqnarray*}
Since $\widetilde{D}_n \rightarrow \infty$\, $\widetilde{\P}_y$-a.s. as $n \rightarrow \infty$, letting $n \rightarrow \infty$ we obtain $\widetilde{Z}_t=0$,\, $\widetilde{\P}_{y}$-a.s, so that pathwise uniqueness is shown. The existence of a strong solution $(Y^y_t)_{t \geq 0}$ to \eqref{sdes} follows from the Yamada-Watanabe Theorem (\cite[Corollary 1]{YW71}).
\vskip 0.1cm \hfill$\Box$\\
\text{}\\
{\rm Proof of Corollary \ref{cor:1.2}.}\;\;
Let  $A=\sigma=id$, $\sqrt{\frac{1}{\psi}}\,(x)=\|x\|^{\alpha/2}$, $x \in \R^d$ and $E=\R^d \setminus \{ 0\}$. Then
{\bf (H1)}--{\bf (H4)} are satisfied and the SDE \eqref{sdes} becomes the SDE \eqref{sdesspe}. Choose $q \in (2d+2, \frac{d}{\alpha})$ and $\widetilde{q} \in (\frac{d}{2}, \frac{d}{2-\alpha} \wedge \frac{d}{2}+\varepsilon)$. Then $\psi \in L^{q}_{loc}(\R^d)$
$\sqrt{\frac{1}{\psi}} \in H^{1,2\widetilde{q}}_{loc}(\R^d)$ and $g_i \in H^{1, \widetilde{q}}_{loc}(\R^d)$ for all $1\leq i \leq d$. Therefore, the assertion follows from Theorem \ref{theo:1.1}. \vskip 0.1cm \hfill$\Box$
\begin{rem}
\; In Theorems \ref{theo:1.1}, \ref{theo:2.1} and Corollary \ref{kryloves}, the condition {\bf (H2)} can be replaced by any ohter condition that implies the non-explosion of $\M$ defined in Section \ref{frame} (cf. \cite{LT19de}).
\end{rem}

\begin{exam}
\;Let $d \geq 3$ and for some $\varepsilon>0$ let $\mathbf{G}=(g_1, \ldots, g_d) \in L_{loc}^{\infty}(\R^d, \R^d)$ with $g_i  \in H^{1,\frac{d}{2}+\varepsilon}_{loc}(\R^d)$ for all $1 \leq i,j \leq d$. Consider the following It\^{o}-SDE
\begin{equation} \label{exdesde1}
X_t = y+\int_0^t \sqrt{\frac{1}{\psi}}\,(X_s) \cdot id \, dW_s + \int_0^t \mathbf{G}(X_s) ds, \qquad 0 \leq t<\infty, \;
\end{equation}
where $y$ and $\sqrt{\frac{1}{\psi}}$ are specified below in three different cases. 
\begin{itemize}
\item[(i)]
For $\alpha \in [0, \frac{d}{2d+2})$ and $\gamma \in (0, \infty)$, let 
$$
\sqrt{\frac{1}{\psi}}(x):=\|x\|^{\alpha/2}1_{\{x\not=0\}}(x)+\gamma 1_{\{x=0\}}(x), \quad x\in \R^d.
$$
Then $\left \{\sqrt{\frac{1}{\psi}}=0\right\}=\emptyset$. Thus by Theorem \ref{theo:1.1} (cf. proof of Corollary \ref{cor:1.2}), for $y \in \R^d \setminus \{0\}$ pathwise uniqueness holds for \eqref{exdesde1} in the usual sense, i.e. if 
$$
(\widetilde{\Omega}, \widetilde{\F}, \widetilde{\P}_{y}, (\widetilde{\F}_t)_{t \ge 0}, (\widetilde{X}^k_t)_{t \ge 0}, (\widetilde{W}_t)_{t \ge 0}), \quad k \in \{1,2 \}
$$
are weak solutions to \eqref{exdesde1}, then
\begin{equation*}
\widetilde{\P}_{y}(\widetilde{X}_t^1 =\widetilde{X}_t^2, \; t \geq 0 )=1.
\end{equation*}
Moreover, by Theorem \ref{theo:1.1} and Corollary \ref{kryloves} for $y \in \R^d \setminus \{0\}$ and a $d$-dimensional Brownian motion $(\widetilde{W}_t)_{t \geq 0}$ on a probability space $(\widetilde{\Omega}, \widetilde{\mathcal{F}}, \widetilde{\P})$
there exists a strong solution $(Y^{y}_t)_{t \geq 0}$ to \eqref{exdesde1} satisfying
\begin{equation*}
\int_0^{\infty} 1_{\{0\}}(Y^y_s) ds =0, \quad \text{ $\widetilde{\P}$-a.s.}
\end{equation*}
\item[(ii)]
Let $\alpha \in [0, \frac{d}{2d+2})$. For $i \in \N \cup \{0\}$, let
$$
\phi_i(x) = \|x -2i \mathbf{e}_1\|^{\frac{\alpha}{2}} 1_{B_{1}(2i\mathbf{e}_1)}(x), \quad x \in \R^d,
$$
where $B_1(2i\mathbf{e}_1)$ denotes an open ball with center $2i\mathbf{e}_1$ and radius $1$. Define
$$
\sqrt{\frac{1}{\psi}}(x):= 1_{\R^d \setminus \bigcup_{i=0}^{\infty} B_{1}(2i\mathbf{e}_1) }+ \sum_{i=0}^{\infty} \phi_i(x), \qquad x \in \R^d.
$$
Note that $\left \{\sqrt{\frac{1}{\psi}}=0\right\}= \{ 2i \mathbf{e}_1: i \in \N \cup \{0\} \}$. Thus by Theorem \ref{theo:1.1} (cf. proof of Corollary \ref{cor:1.2}), for $y \in \R^d \setminus \{ 2i \mathbf{e}_1: i \in \N \cup \{0\} \}$ pathwise uniqueness holds for \eqref{exdesde1} in the following sense, i.e. if
 $$
 (\widetilde{\Omega}, \widetilde{\F}, \widetilde{\P}_{y}, (\widetilde{\F}_t)_{t \ge 0}, (\widetilde{X}^k_t)_{t \ge 0}, (\widetilde{W}_t)_{t \ge 0}),\;\; k \in \{1,2 \}
$$
 are weak solutions to \eqref{sdes} satisfying both
\begin{equation*} 
\int_0^{\infty} 1_{\bigcup_{i=0}^{\infty}\{ 2i \mathbf{e}_1 \}}(\widetilde{X}^k_s) ds =0, \quad \text{ $\widetilde{\P}_{y}$-a.s},
\end{equation*}
then
\begin{equation*}
\widetilde{\P}_{y}(\widetilde{X}_t^1 =\widetilde{X}_t^2, \;\; t \geq 0 )=1.
\end{equation*}
Moreover, by Theorem \ref{theo:1.1} and Corollary \ref{kryloves} for $y \in \R^d \setminus \{ 2i \mathbf{e}_1: i \in \N \cup \{0\} \}$ and a $d$-dimensional Brownian motion $(\widetilde{W}_t)_{t \geq 0}$ on a probability space $(\widetilde{\Omega}, \widetilde{\mathcal{F}}, \widetilde{\P})$
there exists a strong solution $(Y^{y}_t)_{t \geq 0}$ to \eqref{exdesde1} satisfying
\begin{equation} \label{zerotimeat}
\int_0^{\infty} 1_ {\{ 2i \mathbf{e}_1: i \in \N \cup \{0\} \}}(Y^y_s) ds =0, \quad \text{ $\widetilde{\P}$-a.s}.
\end{equation}
\item[(iii)]
Let $(\gamma_i)_{i \geq 0}$ be a sequence with $\gamma_i \in (0, \infty)$ for all $i \in \N \cup \{0\}$. For $\alpha \in [0, \frac{d}{2d+2})$ and $i \in \N \cup \{0\}$, let $\phi_i$ be defined as in (ii).  Define
$$
\sqrt{\frac{1}{\psi}}(x):= 1_{\R^d \setminus \bigcup_{i=0}^{\infty} B_{1}(2i\mathbf{e}_1) }+ \sum_{i=0}^{\infty} \gamma_i 1_{\{ 2i \mathbf{e}_1 \}}+ 1_{\R^d \setminus \bigcup_{i=0}^{\infty}\{ 2i \mathbf{e}_1 \}}\left(\sum_{i=0}^{\infty}  \phi_i(x)\right), \;\quad x \in \R^d.
$$
Then $\left \{\sqrt{\frac{1}{\psi}}=0\right\}=\emptyset$. Therefore, by Theorem \ref{theo:1.1} (cf. proof of Corollary \ref{cor:1.2}), for  $y \in \R^d \setminus \{ 2i \mathbf{e}_1: i \in \N \cup \{0\} \}$ pathwise uniqueness holds for \eqref{exdesde1} in the usual sense and by Corollary \ref{kryloves} there exists a strong solution $(Y^y_t)_{t \geq 0}$ to \eqref{exdesde1} satisfying \eqref{zerotimeat}. 
\end{itemize}
\end{exam}
\text{}\\
{\rm Acknowledgement:} \; The author would like to thank Professor Gerald Trutnau for helpful discussions and suggestions. Additionally, the author is grateful to the anonymous referee for reviewing the manuscript carefully and providing useful comments to improve the paper. \\

\text{}\\
\centerline{}
Haesung Lee\\
Department of Mathematics and Computer Science, \\
Korea Science Academy of KAIST, \\
105-47 Baegyanggwanmun-ro, Busanjin-gu,\\
Busan 47162, Republic of Korea\\
E-mail: fthslt14@gmail.com
\end{document}